\newcommand{\norm}[1]{\left\lVert#1\right\rVert}
\newcommand{\abs}[1]{\lvert#1\rvert}
\newcommand{\lrabs}[1]{\left\lvert#1\right\rvert}
\newcommand{\Xscr} {{\mathscr X}}
\newcommand{\eps}{{\varepsilon}}
\renewcommand{\phi}{\varphi}
\newcommand{\R}{\mathbbm{R}}
\newcommand{\N}{\mathbbm{N}}
\newcommand{\Lip}{\operatorname{Lip}}
\renewcommand{\le}{\leq}
\renewcommand{\ge}{\geq}
\newcommand{\BIGOP}[1]{\mathop{\mathchoice%
{\raise-0.22em\hbox{\huge $#1$}}%
{\raise-0.05em\hbox{\Large $#1$}}{\hbox{\large $#1$}}{#1}}}
\newcommand{\BIGboxplus}{\mathop{\mathchoice%
{\raise-0.35em\hbox{\huge $\boxplus$}}%
{\raise-0.15em\hbox{\Large $\boxplus$}}{\hbox{\large $\boxplus$}}{\boxplus}}}
\def\leq {\leqslant}
\def\geq {\geqslant}
\numberwithin{equation}{section}
\newtheorem{theorem}{Theorem}[section] 
\theoremstyle{plain}
\newtheorem{prop}[theorem]{Proposition}
\newtheorem*{bem*}{Bemerkung}
\newcommand{\be}{\begin{eqnarray}}
\newcommand{\ee}{\end{eqnarray}}
\newcommand{\ce}{\begin{eqnarray*}}
\newcommand{\de}{\end{eqnarray*}}
\numberwithin{equation}{section}
\begin{document}
\title[Corrigendum to `Convergence of invariant measures ...']{Corrigendum to `Convergence of invariant measures for singular stochastic diffusion equations'}

\author[I. Ciotir]{Ioana Ciotir}
\address{Department of Mathematics, Faculty of Economics and Business Administration, ``Al. I. Cuza'' University, Bd. Carol no. 9--11, Ia\c{s}{}i, Romania}
\email{ioana.ciotir@feaa.uaic.ro}
\author[J. M. T\"olle]{Jonas M. T\"olle}
\address{Institut f\"ur Mathematik, Technische Universit\"at Berlin (MA 7-5)\\ Stra\ss{}e des 17. Juni 136, 10623 Berlin, Germany}
\email{jonasmtoelle@gmail.com}

\begin{abstract}
We correct a few errors that appeared in [Convergence of invariant measures for singular stochastic diffusion equations, Stochastic Process. Appl. {\bf 122} (2012), no. 4, 1998--2017] by I. Ciotir and J.M. T\"olle.
\end{abstract}
 \keywords{Stochastic evolution equation, stochastic diffusion equation, $p$-Laplace equation, $1$-Laplace equation, total variation flow, fast diffusion equation, ergodic semigroup, unique invariant measure, variational convergence, $e$-property}
 \subjclass[2000]{60H15; 35K67, 37L40, 49J45}

\maketitle

We have decided to write this corrigendum because of the emergence of a new result, credited to
H. Br\'ezis, and worked out by V. Barbu and M. R\"ockner \cite[Section 8, Appendix 1]{BR12}. In our work \cite{CiotToe}, we claim and make use of the validity of \cite[Eq. (4.19)]{BDPR} (e.g. as the necessary ingredient for \cite[``Step 1'' in the proof of Theorem 4.4, p. 2011]{CiotToe}). The basic arguments to verify \cite[Eq. (4.19)]{BDPR} (which is \eqref{eq:brezis} below) turned out to be wrong, even for smooth domains. Let us present the result of H. Br\'ezis and discuss the consequences
for our work. For a detailed proof, we refer the reader to \cite[Section 8, Appendix 1]{BR12}.

\begin{prop}\label{prop:brezis}
Let $\Lambda$ be a bounded, convex domain of $\R^d$, $d\ge 1$, with piecewise smooth
boundary $\partial\Lambda$ of class $C^2$. Let $J_n=(1-\frac{\Delta}{n})^{-1}$, $n\in\N$, be the resolvent
of the Dirichlet Laplacian $(-\Delta,D(-\Delta))$, where $D(-\Delta)=H^{1}_0(\Lambda)\cap H^2(\Lambda)$. Then
\begin{equation}\label{eq:brezis}\int_\Lambda\abs{\nabla J_n(u)}\,d\xi\le\int_\Lambda\abs{\nabla u}\,d\xi,\quad\forall u\in W^{1,1}_0(\Lambda),\;n\in\N.\end{equation}
\end{prop}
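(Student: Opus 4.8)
The plan is to reduce to smooth data and run an integration-by-parts argument on the gradient of $v:=J_n(u)$, in which the convexity of $\Lambda$ enters precisely through the sign of the boundary curvature. Since $C_0^\infty(\Lambda)$ is dense in $W^{1,1}_0(\Lambda)$ and, by elliptic regularity on convex $C^2$ domains, $v$ is as regular as the data permits, I would first establish \eqref{eq:brezis} for $u\in C_0^\infty(\Lambda)$ and then pass to the limit. Writing $\lambda=1/n$, the function $v=J_n(u)$ solves $v-\lambda\Delta v=u$ with $v\in H^1_0(\Lambda)\cap H^2(\Lambda)$. To avoid dividing by zero I would carry out the computation with the regularised length $\rho_\eps=\sqrt{\eps^2+\abs{\nabla v}^2}$ and then send $\eps\downarrow 0$; below I write $\rho=\abs{\nabla v}$ for brevity.

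Differentiating the equation gives $\nabla v-\lambda\,\Delta\nabla v=\nabla u$. Testing with $\nabla v/\rho$ and integrating yields
\[
\int_\Lambda\rho\,\d\xi=\int_\Lambda\nabla u\cdot\frac{\nabla v}{\rho}\,\d\xi+\lambda\int_\Lambda(\Delta\nabla v)\cdot\frac{\nabla v}{\rho}\,\d\xi,
\]
where the first term is bounded by $\int_\Lambda\abs{\nabla u}\,\d\xi$ because $\abs{\nabla v/\rho}\le 1$. Everything then reduces to showing that the second term is nonpositive. Integrating it by parts componentwise and using $\sum_j\partial_jv\,\nabla\partial_jv=\tfrac12\nabla\rho^2=\rho\,\nabla\rho$, I obtain an interior contribution $-\int_\Lambda\rho^{-1}\bigl(\abs{\nabla^2v}^2-\abs{\nabla\rho}^2\bigr)\,\d\xi$ together with a boundary contribution $\int_{\partial\Lambda}\partial_\nu\rho\,\d\sigma$. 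The interior term is $\le 0$ by the pointwise Kato-type inequality $\abs{\nabla\rho}=\bigl\lvert\nabla\abs{\nabla v}\bigr\rvert\le\abs{\nabla^2v}$, which is a direct consequence of Cauchy--Schwarz, so it only helps.

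The crux is the sign of the boundary term, and this is exactly where convexity is indispensable. Since $v\in H^1_0(\Lambda)$ and $u$ has zero trace, along $\partial\Lambda$ the full gradient is normal, $\nabla v=(\partial_\nu v)\nu$, whence $\partial_\nu\rho=\sgn(\partial_\nu v)\,\nu^{\!\top}\nabla^2v\,\nu$. Decomposing $\Delta v$ into its normal and tangential parts in a boundary frame and using that $v|_{\partial\Lambda}\equiv 0$ forces the tangential Hessian to reduce to the second fundamental form, I obtain $\nu^{\!\top}\nabla^2v\,\nu=\Delta v-H\,\partial_\nu v$ on $\partial\Lambda$, with $H$ the mean curvature. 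The term $\Delta v$ drops out because $\Delta v=n(v-u)$ has vanishing trace (both $v$ and $u$ vanish on $\partial\Lambda$), leaving $\partial_\nu\rho=-H\abs{\nabla v}$ on $\partial\Lambda$; since $\Lambda$ is convex, $H\ge 0$, so $\int_{\partial\Lambda}\partial_\nu\rho\,\d\sigma\le 0$. Combining the three estimates gives $\int_\Lambda\abs{\nabla v}\,\d\xi\le\int_\Lambda\abs{\nabla u}\,\d\xi$, and density together with lower semicontinuity of the gradient norm concludes the proof. I expect the genuine difficulties to be analytic rather than algebraic: justifying the boundary identity and the vanishing trace of $\Delta v$ rigorously, controlling the set $\{\nabla v=0\}$ as $\eps\downarrow 0$, and---most delicately---treating the edges of the merely piecewise-$C^2$ boundary, which I would handle by approximating $\Lambda$ from inside by smooth convex domains so that the sign condition $H\ge 0$ is preserved.
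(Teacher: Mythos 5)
The paper itself gives no argument for this proposition beyond the citation to \cite[Proposition 8.1, Remark 8.4]{BR12}, but your proof is essentially the Br\'ezis argument worked out there: differentiate the resolvent equation, test against the (regularised) unit gradient field, kill the interior term by Kato's inequality $\abs{\nabla\abs{\nabla v}}\le\abs{\nabla^2 v}$, and control the boundary term through the second fundamental form, whose sign is exactly where convexity enters. Your observation that $\Delta v=n(v-u)$ has vanishing trace, so that $\partial_\nu\abs{\nabla v}=-H\abs{\nabla v}\le 0$ on $\partial\Lambda$, is a clean way to close the boundary estimate, and the technical points you flag (regularity needed to justify the boundary identity, the limit $\eps\downarrow 0$, and exhausting the piecewise-$C^2$ convex domain by smooth convex subdomains) are precisely the ones the cited reference has to address.
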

\begin{proof}
See \cite[Proposition 8.1, Remark 8.4]{BR12}.
\end{proof}

The main consequence of Proposition \ref{prop:brezis}, or, to be more precise, of the lack of a proof for a more general situation, is that Theorem 4.4 in \cite{CiotToe} merely holds for
bounded, convex domains $\Lambda$ with piecewise smooth
boundary $\partial\Lambda$ of class $C^2$ and not (yet), as claimed by us,
for a general Lipschitz boundary.

At another point of our work, in the proof of \cite[Theorem 3.2]{CiotToe}, we use a Krylov-Bogoliubov-type
argument for the convergence of invariant measures. The passage to the limit,
however, remains nebulous. Let us remark that the statement of \cite[Theorem 3.2]{CiotToe}
follows, combined with the (uniform) tightness of the invariant measures proved in \cite[proof of Theorem 3.2]{CiotToe}, from the more general Proposition \ref{prop:conv} below.

Let $(\Xscr,d)$ be a Polish space with complete metric $d$.

\begin{prop}\label{prop:conv}
Let $\{P_t^n\}_{t\ge 0}$, $n\in\N$, be Feller semigroups on $(\Xscr,d)$ with invariant measures $\mu_n$, $n\in\N$, respectively.
Suppose that $\{\mu_n\}_{n\in\N}$ has a weakly converging subsequence and $\{P_t^n\}$ converges to a Feller semigroup $\{P_t\}_{t\ge 0}$ in the following sense:
For all $t\ge 0$, $\psi\in\Lip_b(\Xscr)$, $x\in\Xscr$ we have that
\[\lim_n P_t^n\psi(x)=P_t\psi(x).\]

Suppose that the $\{P_t^n\}$ have a uniform ``\emph{Lipschitz-type'' $e$-property},
that is,
\begin{multline*}\exists C>0:\;\forall\psi\in\Lip_b(\Xscr),\;\forall x\in\Xscr\;\forall z\in\Xscr,\\
\forall t\ge 0,\;\forall n\in\N,\quad\abs{P_t^n\psi(x)-P_t^n\psi(z)}\le C\Lip(\psi)\, d(x,z).\end{multline*}
If $\{P_t\}$ admits a unique
invariant measure $\mu$, then $\mu_n\to\mu$ weakly.
\end{prop}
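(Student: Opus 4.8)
The plan is to show that the weak limit of \emph{any} weakly convergent subsequence of $\{\mu_n\}$ must coincide with the unique invariant measure $\mu$ of $\{P_t\}$, and then to promote this to convergence of the full sequence. So let $\{\mu_{n_k}\}$ be a weakly convergent subsequence, say $\mu_{n_k}\to\nu$ weakly, and the first goal is to prove that $\nu$ is $\{P_t\}$-invariant. By invariance of $\mu_{n_k}$ under $\{P_t^{n_k}\}$ we have, for every $\psi\in\Lip_b(\Xscr)$ and every $t\ge0$,
\[\int_{\Xscr}P_t^{n_k}\psi\,d\mu_{n_k}=\int_{\Xscr}\psi\,d\mu_{n_k},\]
and the aim is to pass to the limit $k\to\infty$ on both sides. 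The right-hand side tends to $\int\psi\,d\nu$ since $\psi\in C_b(\Xscr)$. For the left-hand side I would write $\int P_t^{n_k}\psi\,d\mu_{n_k}-\int P_t\psi\,d\nu$ as $\int(P_t^{n_k}\psi-P_t\psi)\,d\mu_{n_k}$ plus $\int P_t\psi\,d\mu_{n_k}-\int P_t\psi\,d\nu$; the second term vanishes in the limit because $P_t\psi\in C_b(\Xscr)$ by the Feller property and $\mu_{n_k}\to\nu$ weakly.

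The hard part is the first term, where the integrand \emph{and} the integrating measure vary simultaneously, so that the bare pointwise convergence $P_t^{n}\psi\to P_t\psi$ supplied by hypothesis does not suffice. This is exactly where the uniform ``Lipschitz-type'' $e$-property is decisive: it forces every $P_t^{n}\psi$ to be $C\Lip(\psi)$-Lipschitz uniformly in $n$, and since $\lVert P_t^n\psi\rVert_\infty\le\lVert\psi\rVert_\infty$, the family $\{P_t^n\psi\}_n$ is uniformly bounded and equicontinuous. By Arzel\`a--Ascoli on compacta, combined with the pointwise convergence to $P_t\psi$ (which pins down the unique cluster point), one upgrades to $P_t^n\psi\to P_t\psi$ \emph{uniformly on every compact set}. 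On the other hand, being weakly convergent, $\{\mu_{n_k}\}$ is tight by Prokhorov's theorem, so for $\eps>0$ there is a compact $K_\eps$ with $\sup_k\mu_{n_k}(\Xscr\setminus K_\eps)\le\eps$. Splitting the integral over $K_\eps$ and its complement, bounding the first piece by $\sup_{K_\eps}\lvert P_t^{n_k}\psi-P_t\psi\rvert\to0$ and the second by $2\lVert\psi\rVert_\infty\eps$, and letting first $k\to\infty$ and then $\eps\to0$, shows that this first term vanishes.

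Combining the two limits yields $\int P_t\psi\,d\nu=\int\psi\,d\nu$ for all $\psi\in\Lip_b(\Xscr)$ and all $t\ge0$; since $\Lip_b(\Xscr)$ is measure-determining on the metric space $(\Xscr,d)$, this gives $\nu P_t=\nu$, i.e.\ $\nu$ is an invariant measure of $\{P_t\}$, whence $\nu=\mu$ by the assumed uniqueness. Since this argument applies verbatim to every weakly convergent subsequence, each subsequential weak limit of $\{\mu_n\}$ equals $\mu$. The one point that requires care is the final passage to convergence of the \emph{whole} sequence: a single convergent subsequence only yields convergence of that subsequence, so one invokes relative compactness of $\{\mu_n\}$ (in the applications this is exactly the uniform tightness established in the proof of \cite[Theorem 3.2]{CiotToe}); then, as every subsequence has a weakly convergent sub-subsequence and all such limits equal $\mu$, the subsequence criterion in the metrizable weak topology on $\Pcal(\Xscr)$ forces $\mu_n\to\mu$ weakly.
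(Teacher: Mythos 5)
Your proof is correct, but the way you handle the crux --- the term where both the integrand and the measure vary with $n$ --- is genuinely different from the paper's. The paper estimates $\lrabs{\int\psi\,d\mu_0-\int P_t\psi\,d\mu_0}$ by three pieces, the critical one being $\int P_t^n\psi\,d\mu_n-\int P_t^n\psi\,d\mu_0$ (same integrand, two measures), and bounds it by $\beta(\mu_n,\mu_0)\left[\norm{\psi}_\infty+C\Lip(\psi)\right]$, where $\beta$ is the bounded Lipschitz metric metrizing weak convergence; the uniform $e$-property enters only through the inequality $\Lip(P_t^n\psi)\le C\Lip(\psi)$. You instead isolate $\int(P_t^{n}\psi-P_t\psi)\,d\mu_{n}$ (two integrands, one measure) and dispose of it by upgrading the pointwise convergence $P_t^n\psi\to P_t\psi$ to uniform convergence on compacta via the equicontinuity furnished by the same $e$-property (Arzel\`a--Ascoli), combined with tightness of the weakly convergent subsequence from the converse half of Prokhorov's theorem. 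Both routes rest on the identical input --- the uniform Lipschitz bound on $P_t^n\psi$ together with $\norm{P_t^n\psi}_\infty\le\norm{\psi}_\infty$ --- and both are valid; the paper's is shorter because the bounded Lipschitz metric packages the compactness argument into a single inequality, while yours is more elementary in that it needs no metrization of weak convergence. You are also more careful than the paper at the final step: the paper simply asserts that identifying the limit of the given subsequence yields convergence of the whole sequence, whereas you correctly observe that this requires relative compactness (uniform tightness) of the full family $\{\mu_n\}$, which in the intended application is exactly what the proof of Theorem 3.2 of the original article supplies.
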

\begin{proof}
Let $\{\mu_{n_k}\}$ be a subsequence of $\{\mu_n\}$ such
that $\mu_{n_k}\to\mu_0$ weakly as $k\to\infty$ where $\mu_0$ is some probability measure. For simplicity, let us just
write $\{\mu_n\}$. Recall that weak convergence
of probability measures
is metrizable with the so-called \emph{bounded Lipschitz metric} defined by
\[\beta(\nu_1,\nu_2):=\sup\left\{\lrabs{\int_\Xscr\psi\, d(\nu_1-\nu_2)}\;\bigg\vert\;\psi\in\Lip_b(\Xscr),\;\norm{\psi}_\infty+\Lip(\psi)\le 1\right\},\]
compare with \cite[1.12, pp. 73/74]{vdVW96}.

If we can prove that $\mu_0$ is an invariant measure of $\{P_t\}_{t\ge 0}$,
we are done and the whole sequence $\{\mu_n\}$ converges to $\mu=\mu_0$. Let
$\psi\in\Lip_b(\Xscr)$, $t\ge 0$, and
\begin{align*}
&\lrabs{\int_\Xscr \psi\,d\mu_0-\int_\Xscr P_t\psi\,d\mu_0}\\
\le&\lrabs{\int_\Xscr \psi\,d\mu_0-\int_\Xscr P_t^n\psi\,d\mu_n}+\lrabs{\int_\Xscr P_t^n \psi\,d\mu_n-\int_\Xscr P_t^n\psi\,d\mu_0}
\\&+\lrabs{\int_\Xscr P_t^n \psi\,d\mu_0-\int_\Xscr P_t\psi\,d\mu_0}
\end{align*}
By invariance, the first term equals
\[\lrabs{\int_\Xscr \psi\,d\mu_0-\int_\Xscr\psi\,d\mu_n}\]
and hence converges to zero as $n\to\infty$.

The third term converges to zero by the convergence of semigroups and
Lebesgue's dominated convergence, since the integrand is bounded
by $2\norm{\psi}_\infty$.

Let us investigate the second term:
\begin{align*}&\lrabs{\int_\Xscr P_t^n \psi\,d\mu_n-\int_\Xscr P_t^n\psi\,d\mu_0}\\
 \le&\beta(\mu_n,\mu_0)\left[\norm{P_t^n\psi}_\infty+\Lip(P_t^n\psi)\right]\\
\le&\beta(\mu_n,\mu_0)\left[\norm{\psi}_\infty+C\Lip(\psi)\right]\\
\longrightarrow\,& 0\quad\text{as}\;n\to\infty.
\end{align*}
Hence $\lrabs{\int_\Xscr \psi\,d\mu_0-\int_\Xscr P_t\psi\,d\mu_0}=0$ for
all $\psi\in\Lip_b(\Xscr)$ and all $t\ge 0$ and so $\mu_0$ is invariant
for $\{P_t\}$.
\end{proof}

Of course in our work, $(\Xscr,d)$ is the Hilbert space
$L^2(\Lambda)$ or $H^{-1}(\Lambda)$.
We note that in \cite{CiotToe}, the Lipschitz-type $e$-property (with $C=1$) can
be verified for the approximating semigroups by It\={o}'s formula
and the monotonicity of the operators involved (not so for the case
$p=1$, which is not needed, though).

Furthermore, we have conjectured the convergence of invariant measures for
the case $p=1$, namely in \cite[Conjecture 4.5]{CiotToe}. Now, the conjecture has been solved by B. Gess and the second-named author; see \cite[Section 7]{GesToe}.

The use of the wrong energy in \cite{BDPR}, as indicated by us in
\cite[Remark 4.3]{CiotToe}, was corrected by V. Barbu, G. Da Prato and M. R\"ockner in
\cite{BDPR12}. In fact, the lower semi-continuous envelope of
the 1-Laplacian energy is well-studied and characterized in e.g.
\cite[Proposition 11.3.2, p. 438]{ABM}.

\subsection*{Some typographical errors}

\begin{itemize}
\item
In \cite{CiotToe},
on p. 2000, the sixth line from the bottom, $(W^{1,p})^\ast$ should clearly
appear as a subscript.
\item In \cite[Definition 4.2, p. 2008]{CiotToe}, it should read
$u$ instead of $x$ in the third line.
\item In \cite[Remark 4.3, p. 2009]{CiotToe}, in line 7 it should read
\[\varliminf_n\Psi(u_n)=\operatorname{Per}(\Lambda,\R^d)<+\infty=\Psi(\mathbbm{1}_\Lambda),\]
where $\operatorname{Per}(\Lambda,\R^d)$ is the \emph{perimeter}.
In particular, for dimension $d=1$, this implies
\[\varliminf_n\Psi(u_n)=2<+\infty=\Psi(\mathbbm{1}_\Lambda).\]
\item At the end of \cite[proof of Lemma 4.7, p. 2010]{CiotToe},
it should read $\Psi_\eps^p${'}{s} instead of $\Psi_\eps^p${\'s}.
\end{itemize}

\end{document}